\newtheorem{theorem}{Theorem}[section]
\theoremstyle{definition}
\newtheorem{remark}[theorem]{Remark}
\DeclareMathOperator{\card}{card}
\DeclareMathOperator{\var}{vars}
\title{The Bohnenblust--Hille inequality combined with an inequality of Helson}
\author{Daniel Carando\footnote{Departamento de Matematica - Pab I,
Facultad de Cs. Exactas y Naturales, Universidad de Buenos Aires, 1428 Buenos Aires (Argentina) and IMAS - CONICET (Argentina). Partially supported by CONICET-PIP 0624, PICT 2011-1456 and UBACyT Grant 1-746}
\and
Andreas Defant\footnote{Institut f\"ur Mathematik. Universit\"at Oldenburg. D-26111 Oldenburg (Germany). Supported by MICINN  MTM2011-22417 }
\and
Pablo Sevilla-Peris \footnote{Instituto Universitario de Matem\'atica Pura y Aplicada. Universitat Polit\`ecnica de Val\`encia. 46022 Valencia (Spain). Supported by MICINN  MTM2011-22417 and UPV-SP201207000 }}
\date{}
\begin{document}

\maketitle

\begin{abstract}
\noindent We give a variant of the Bohenblust-Hille inequality which, for certain families of polynomials, leads to constants with polynomial growth in the degree.
\end{abstract}

\section{Introduction}
Hardy and Littlewood showed in \cite{HaLi32} that there exists a constant $K>0$ such that for every $f\in H^{1}$ we have
\begin{equation*}
 \bigg( \int_{\mathbb{D}}  \vert f(z) \vert^{2} dm(z) \bigg)^{1/2} \leq K \int_{\mathbb{T}} \vert f(w) \vert d\sigma(w) \,,
\end{equation*}
where $dm$ and $d\sigma$ denote respectively the normalised Lebesgue measures on the complex unit disk $\mathbb{D}$ and the torus (or unit circle) $\mathbb{T}$.  Equivalently, this means that  the Hardy space $H_1(\mathbb{T})$ is contained in the Bergman
space $B_2(\mathbb{D})$. Shapiro \cite[p.~117-118]{Sh77} showed that the inequality holds with $K=\pi$ and
Mateljevi\'{c} \cite{Ma79} (see also \cite{Ma80,Vu03})
showed that  actually the constant could be taken $K=1$. A simple
reformulation of the Bergman norm then gives that  if $\sum_{n=0}^{\infty} a_{n} z^{n}$ is the Fourier series expansion of $f\in H^{1}(\mathbb{D})$ we have
\[
 \bigg( \sum_{n=0}^{\infty} \frac{\vert a_{n} \vert^{2}}{n+1} \bigg)^{1/2} \leq \int_{\mathbb{T}} \vert f(w) \vert d\sigma(w) \,.
\]
A few years later Helson in \cite{He06}  generalised this inequality to functions in $N$ variables. For $n \in \mathbb{N}$  denote by $d(n)$  the number of divisors  and by $p^{\alpha}= p_{1}^{\alpha_{1}} \cdots p_{k}^{\alpha_{k}}$  the prime decomposition of $n$. Then
we have that for every $f \in H^{1}(\mathbb{T}^{N})$ with Fourier series expansion $\sum_{\alpha \in \mathbb{N}_{0}^{N}} c_{\alpha} z^{\alpha}$
\begin{equation} \label{Helson}
  \bigg( \sum_{\alpha \in \mathbb{N}_{0}^{N}} \frac{\vert c_{\alpha} \vert^{2}}{d(p^{\alpha})} \bigg)^{1/2} \leq \int_{\mathbb{T}^{N}} \vert f(w) \vert d\sigma(w) \,.
\end{equation}
Given a multiindex $\alpha$, we write $\alpha + 1=(\alpha_{1}+1) \cdots (\alpha_{k}+1)$.
Note that, with this notation, we have $d(p^{\alpha}) = \alpha + 1$.\\

On the other hand, by the Bohnenblust-Hille inequality \cite{BoHi31} as presented in \cite{DeFrOrOuSe11} there is a constant $C>0$ such that for every $m$-homogeneous polynomial in $N$ variables $P (z) = \sum_{\vert \alpha \vert = m} c_{\alpha} z^{\alpha}$ with
$z \in \mathbb{C}^{N}$ we have
\begin{equation} \label{Bohnenblust Hille}
 \bigg( \sum_{\vert \alpha \vert = m} \vert c_{\alpha} \vert^{\frac{2m}{m+1}} \bigg)^{\frac{m+1}{2m}} \leq C^{m} \sup_{z \in \mathbb{D}^{N}} \vert P(z) \vert \, .
\end{equation}
The proof of this inequality given in \cite{DeFrOrOuSe11} consists basically of two steps: first to decompose  the sum in \eqref{Bohnenblust Hille} as the product of certain mixed sums and second to bound each one of these sums by a term
including $\Vert P \Vert$, the supremum of $|P|$ in $\mathbb{D}^{N}$. For this second step usually the following result of Bayart \cite{Ba02} is used: for every $m$-homogeneous polynomial in $N$ variables we have
\begin{equation} \label{Bayart}
 \bigg( \sum_{\vert \alpha \vert = m} \vert c_{\alpha} \vert^{2}  \bigg)^{1/2} \leq 2^{m/2} \int_{\mathbb{T}^{N}} \Big\vert \sum_{\vert \alpha \vert = m} c_{\alpha} w^{\alpha} \Big\vert d\sigma (w) \,.
\end{equation}
Very recently, it was proved in  \cite[Corollary 5.3]{BaPeSe13} that for every $\varepsilon>0$ there exists $\kappa>0$ such that we can take $\kappa (1+\varepsilon)^m$ as the constant in \eqref{Bohnenblust Hille}. Our aim in this note is get a variant
of  \eqref{Bohnenblust Hille} by using  \eqref{Helson} instead of \eqref{Bayart}. With this variant, we see that for polynomials $P$ each of whose monomials involve a uniformly bounded number of variables, the obtained constants have polynomial growth in $m$.

\section{Main result and some remarks}

The following is our main result.

\begin{theorem}\label{Komische BH}
 Let $\Lambda \subseteq \{ \alpha \in \mathbb{N}_{0}^{N} \colon \vert \alpha \vert =m \}$ be an indexing set. Then for every family $\big( c_{\alpha} \big)_{\alpha \in \Lambda}$ we have
\[
\bigg( \sum_{\alpha \in \Lambda} \Big( \frac{\vert c_{\alpha} \vert }{\sqrt{\alpha + 1}} \Big)^{\frac{2m}{m+1}} \bigg)^{\frac{m+1}{2m}}
\leq m^{\frac{m-1}{2m}} \Big( 1 - \frac{1}{m-1} \Big)^{m-1} \sup_{z \in \mathbb{D}^{N}} \Big\vert \sum_{\alpha \in \Lambda}c_{\alpha} z^{\alpha} \Big\vert \, .
\]
\end{theorem}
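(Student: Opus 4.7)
The plan is to mimic the two-step proof of the polynomial Bohnenblust--Hille inequality from \cite{DeFrOrOuSe11}---a Blei-type decomposition of the left-hand side followed by an $\ell^{2}$-estimate on the resulting inner sums---but with Helson's inequality \eqref{Helson} playing the role of Bayart's \eqref{Bayart}. The substitution is natural: the weight $\sqrt{\alpha+1}$ on the left-hand side of the statement is exactly the one appearing in \eqref{Helson}, and using Helson eliminates the factor $2^{m/2}$ inherited from \eqref{Bayart}, which is the source of the exponential growth in the classical Bohnenblust--Hille constant.

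Concretely, I would first apply a Blei-type inequality (directly for multi-indices, or after passing through the symmetric $m$-linear form $L$ associated with $P$) to bound
\[
\bigg(\sum_{\alpha\in\Lambda}\Big(\tfrac{|c_\alpha|}{\sqrt{\alpha+1}}\Big)^{\frac{2m}{m+1}}\bigg)^{\frac{m+1}{2m}}
\]
by a geometric mean $\prod_{k=1}^{m} S_k^{1/m}$, where each $S_k$ is a mixed $(\ell^{1},\ell^{2})$-norm of the form
\[
S_k = \sum_{i=1}^{N}\bigg(\sum_{\beta:\,|\beta|=m-1} W_{\beta,i}\,|c_{\beta+e_i}|^{2}\bigg)^{1/2},
\]
with the weights $W_{\beta,i}$ chosen so that the inner $\ell^{2}$-sum coincides (up to normalization) with the Helson-weighted $\ell^{2}$-sum of the coefficients of an $(m-1)$-homogeneous slice polynomial $P_{i}$ of $P$ (obtained, for instance, by setting $P_i(\hat z) = L(e_i,\hat z,\dots,\hat z)$).

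Second, for each such inner $\ell^{2}$-sum I would invoke Helson's inequality \eqref{Helson} applied to $P_{i}$, bounding it by $\|P_{i}\|_{L^{1}(\mathbb{T}^{N})}^{2}\leq\|P_{i}\|_{\infty}^{2}\leq\|P\|_{\infty}^{2}$. Multiplying the $m$ resulting factors, taking $m$-th roots, and combining the constants should produce the stated inequality.

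The main obstacle is algebraic: arranging the Blei-type decomposition and the slice construction so that the combinatorial weights $\binom{m}{\alpha}$ (which arise from the conversion between polynomial coefficients $c_\alpha$ and symmetric multi-linear coefficients $a_{\mathbf{j}}$) and the divisor weights $\alpha+1$ combine to give precisely $1/\sqrt{\alpha+1}$ on the left and the constant $m^{(m-1)/(2m)}(1-1/(m-1))^{m-1}$ on the right. The factor $m^{(m-1)/(2m)}$ should reflect an elementary comparison of the form $(\alpha+1)\leq(m+1)\binom{m}{\alpha}$ raised to an appropriate power inside the geometric mean, while the factor $(1-1/(m-1))^{m-1}$ should come from iterating an inner Bohnenblust--Hille-type bound $m-1$ times through the slice polynomials $P_{i}$.
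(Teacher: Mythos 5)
Your skeleton is the same as the paper's (pass to the symmetric $m$-linear form, apply a Blei-type decomposition, and use Helson's inequality \eqref{Helson} in place of Bayart's \eqref{Bayart}), but the second step as you describe it fails. After Blei, each factor has the form $S_k=\sum_{j=1}^{N}\big(\sum_{\boldsymbol{i}}\cdots\big)^{1/2}$, i.e.\ a sum over the slice index $j$ of Helson-weighted $\ell^2$-norms. You bound each inner square root separately by $\|P_j\|_{L^1(\mathbb{T}^N)}\le\|P_j\|_\infty\le\|P\|_\infty$ and then ``multiply the $m$ resulting factors''; but this ignores the outer sum over $j$ and only yields $S_k\le N\,\|P\|_\infty$, so your final bound carries a factor of $N$, whereas the theorem is dimension-free. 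The missing idea is that the sum over $j$ must be estimated \emph{collectively}: in the paper, Helson is applied for each fixed $j$ to get an $L^1(\mathbb{T}^N)$-norm, the sum over $j$ is then pulled inside the integral, the pointwise sum $\sum_j|\cdots|$ is linearized as $\sup_{y\in\mathbb{D}^N}\big|\sum_j y_j(\cdots)\big|$, and the resulting $(m-1,1)$-polarization $L(z,\dots,z,y)$ is bounded by Harris's theorem \cite[Theorem~1]{Ha75}. That single application of Harris is the source of the factor $\big(1-\tfrac1{m-1}\big)^{m-1}$; it is not obtained by ``iterating an inner Bohnenblust--Hille-type bound $m-1$ times''.

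The weight bookkeeping you flag as ``the main obstacle'' is also resolved differently from what you guess. The factor $m^{\frac{m-1}{2m}}$ does not come from comparing $\alpha+1$ with multinomial coefficients; it comes from the combinatorial estimate $\card[(\boldsymbol{i},_k j)]\le m\,\card[\boldsymbol{i}]$ when one index is appended to $\boldsymbol{i}\in\mathcal{M}(m-1,N)$. Moreover, to make the inner $\ell^2$-sums produced by Blei match the Helson weights of the $(m-1)$-homogeneous slice (whose weight is $d(p_{\boldsymbol{i}})$, not $d(p_{(\boldsymbol{i},_k j)})$), one needs the monotonicity $d(p_{\boldsymbol{i}})\le d(p_{(\boldsymbol{i},_k j)})$, which holds because every divisor of $p_{\boldsymbol{i}}$ divides $p_{(\boldsymbol{i},_k j)}$. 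Without these two elementary observations, and above all without the Harris polarization step controlling the sum over the slice index, the proposal does not prove the stated ($N$-independent) inequality.
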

\noindent We give  several remarks before  we present the proof.

\begin{remark} \text{}
\begin{enumerate}
\item
It is easy to see that $\sqrt{\alpha + 1} \leq \sqrt{2}^m$. Hence the preceding inequality includes the hypercontractive version of the Bohnenblust-Hille inequality from \eqref{Bohnenblust Hille} as a special case.

\item
Thanks to the term $\sqrt{\alpha+1}$, the constants in the previous inequality  grow much more slowly than the constants in \eqref{Bohnenblust Hille}. Actually, we have
$$m^{\frac{m-1}{2m}} \big( 1 - \frac{1}{m-1} \big)^{m-1} = \frac{\sqrt{m}} e\ \big( 1  + o(m) \big).$$
\item
Let $\var(\alpha)$ denote the numbers of different variables involved in the monomial $z^\alpha$. In other words, $\var(\alpha)=\card\big\{j:\alpha_j\ne 0\big\}$.
Given $M$ we consider the set
\[
\Lambda_{N,M} = \big\{\alpha \in \mathbb{N}_{0}^{N} \colon \vert \alpha \vert =m \text{ and } \var(\alpha)\le M \big\}\,,
\]
(note that if $M\geq N$, then $\Lambda_{N,M}=\Lambda_{N,N}$).
An application of Lagrange multipliers gives that for any $\alpha\in \Lambda_{N,M}$ we have for every $N$ and $M$
$$\alpha+1= (\alpha_{1}+1) \cdots (\alpha_{k}+1) \cdots \le \Big( \frac m M + 1 \Big)^M.$$
Combining this with Theorem~\ref{Komische BH} we obtain for every $m$, $N$, $M$
\begin{equation*}
\begin{split}
\bigg( \sum_{\alpha \in \Lambda_{N,M}}  {\vert c_{\alpha} \vert } ^{\frac{2m}{m+1}} \bigg)^{\frac{m+1}{2m}}
&
\le
\Big( \frac m M + 1 \Big)^{M/2} \bigg( \sum_{\alpha \in \Lambda_{N,M}} \Big( \frac{\vert c_{\alpha} \vert }{\sqrt{\alpha + 1}} \Big)^{\frac{2m}{m+1}} \bigg)^{\frac{m+1}{2m}}
\\&
\le  \Big( \frac m M + 1 \Big)^{M/2}  m^{\frac{m-1}{2m}} \Big( 1 - \frac{1}{m-1} \Big)^{m-1} \sup_{z \in \mathbb{D}^{N}} \Big\vert \sum_{\alpha \in \Lambda_{N,M}}c_{\alpha} z^{\alpha} \Big\vert \, ,
\end{split}
\end{equation*}
hence
\begin{equation} \label{Nvariables}
\begin{split}
\bigg( \sum_{\alpha \in \Lambda_{N,M}}  {\vert c_{\alpha} \vert } ^{\frac{2m}{m+1}} \bigg)^{\frac{m+1}{2m}} \leq 2^{\frac{M}{2}}  m^{\frac {M+1} 2} \ \sup_{z \in \mathbb{D}^{N}} \Big\vert \sum_{\alpha \in \Lambda_{N,M}}c_{\alpha} z^{\alpha} \Big\vert \,.
\end{split}
\end{equation}
This means that for polynomials whose monomials have a uniformly boun\-ded number $M$ of different variables,  we get a Bohnenblust-Hille type inequality with a constant of polynomial growth in $m$.   We remark
that the dimension $N$ plays no role in this inequality, the only important point here is the number of different variables in each monomial. As a consequence, an analogue of \eqref{Nvariables} holds for  $m$-homogeneous  polynomials on $c_0$:
 Let $P:c_0\to \mathbb C$ be  an $m$-homogeneous  polynomial and
\[
 \Lambda_{M} = \{\alpha \in \mathbb{\mathbb N}_{0}^{(\mathbb N)} \colon \vert \alpha \vert =m \text{ and } \var(\alpha)\le M\} \,.
\]
Then  for every $M$ and $m$
\[
\bigg( \sum_{\alpha \in \Lambda_{M}}  {\vert c_{\alpha}(P) \vert } ^{\frac{2m}{m+1}} \bigg)^{\frac{m+1}{2m}}
\leq 2^{\frac{M}{2}}  m^{\frac {M+1} 2} \ \|P\| \, ,
\]
where the $c_\alpha(P)$ are the coefficients of $P$ and $\|P\|$ is the supremum of $|P|$ on the unit ball of $c_0$.

\item In  \cite[Theorem~5.3]{DeMaSc12} a very general version of the Bohnenblust-Hille inequality is given, involving operators with values on a Banach lattice. A straightforward combination of the proof of Theorem~\ref{Komische BH} (see the final
section) and the arguments presented in \cite[Theorem~5.3]{DeMaSc12} easily gives a version of Theorem~\ref{Komische BH} in that setting.

\end{enumerate}
\end{remark}

\medskip

\section{The proof}

Let us fix some notation before we prove our main result. We are going to use the following indexing sets
\begin{gather*}
  \mathcal{M}(m,N) = \{ \boldsymbol{i}= (i_{1}, \ldots , i_{m} ) \colon 1 \leq i_{j} \leq N , \, j=1, \ldots , m \} \\
  \mathcal{J}(m,N) = \{ \boldsymbol{i} \in  \mathcal{M}(m,N)  \in  \colon 1 \leq i_{1} \leq \cdots \leq i_{m} \leq N  \} \, .
\end{gather*}
In $\mathcal{M}(m,N)$ we define an equivalence relation by $\boldsymbol{i} \sim \boldsymbol{j}$ if there is a permutation $\sigma$ of $\{1, \ldots , N\}$ such that $j_{k}=i_{\sigma(k)}$ for
every $k$. With this, if $\big( a_{i_{1}, \ldots , i_{m}} \big)$ are symmetric then we have
\[
  \sum_{\boldsymbol{i} \in \mathcal{M}(m,N)} a_{\boldsymbol{i} } = \sum_{\boldsymbol{i} \in   \mathcal{J}(m,N)} \sum_{\boldsymbol{j} \in [\boldsymbol{i}]}  a_{\boldsymbol{j} }
=  \sum_{\boldsymbol{i} \in   \mathcal{J}(m,N)} \card [\boldsymbol{i}]  a_{\boldsymbol{i} } \, .
\]
Also, given $\boldsymbol{i} \in \mathcal{M}(m-1,N)$ and $j \in \{1, \ldots , N\}$, for $1 \leq k \leq m-1$ we define $(\boldsymbol{i},_{k} j) = (i_{1}, \ldots , i_{k-1},j,i_{k} , \ldots , i_{m-1} )
\in  \mathcal{M}(m,N)$ (that is, we put $j$ in the $k$-th possition, shifting the rest to the right).\\
There is a one-to-one correspondance between $ \mathcal{J}(m,N)$ and $\{ \alpha \in \mathbb{N}_{0}^{N} \colon \vert \alpha \vert =m \}$ defined as follows. For each $\boldsymbol{i}$
we define $\alpha = (\alpha_{1}, \ldots , \alpha_{N})$ by $\alpha_{r} = \card \{ j \colon i_{j}=r \}$ (i.e. $\alpha_{r}$ counts how many times $r$ comes in $\boldsymbol{i}$); on the other hand,
given $\alpha$ we define $\boldsymbol{i} = (1, \stackrel{\alpha_{1} }{\ldots} , 1 , \ldots , N , \stackrel{\alpha_{N} }{\ldots},N) \in  \mathcal{J}(m,N)$.\\
Each $m$-homogeneous polynomial on $N$ variables has a unique symmetric  $m$-linear form $L: \mathbb{C}^{N} \times \cdots \times \mathbb{C}^{N} \to \mathbb{C}$ such that $P(z) =
L(z, \ldots , z)$ for every $z$. If $(c_{\alpha})$ are the coefficients of the polynomial and $a_{i_{1}, \ldots, i_{m}} = L(e_{i_{1} }, \ldots , e_{i_{m}})$ is the matrix of $L$ we have
$ c_{\alpha} = \card[\boldsymbol{i}] a_{\boldsymbol{i}}$, where $\alpha$ and $\boldsymbol{i}$ are related to each other.\\
Finally, if $\alpha$ and $\boldsymbol{i}$ are related and $p_{1} < p_{2} < \cdots$ denotes the sequence of prime numbers, we will write $p^{\alpha} = p_{1}^{\alpha_{1}} \cdots p_{N}^{\alpha_{N}}
= p_{i_{1}} \cdots p_{i_{m}} = p_{\boldsymbol{i}}$.

\begin{proof}[Proof of Theorem~\ref{Komische BH}]
We follow essentially the guidelines of the proof of the Bohnenblust-Hille inequality as presented in \cite{DeFrOrOuSe11}. First of all let us assume that $c_{\alpha}=0$ for every $\alpha \not\in \Lambda$; then we have
\begin{multline*}
 \bigg( \sum_{\alpha \in \Lambda} \Big( \frac{\vert c_{\alpha} \vert }{\sqrt{\alpha + 1}} \Big)^{\frac{2m}{m+1}} \bigg)^{\frac{m+1}{2m}}
=  \bigg( \sum_{\boldsymbol{i} \in \mathcal{J}(m,N)} \Big\vert \card[\boldsymbol{i}] \frac{a_{\boldsymbol{i}}}{\sqrt{d(p_{\boldsymbol{i}} ) }} \Big\vert^{\frac{2m}{m+1}}  \bigg)^{\frac{m+1}{2m}} \\
=  \bigg( \sum_{\boldsymbol{i} \in \mathcal{M}(m,N)} \frac{1}{\card[\boldsymbol{i}]}\Big\vert \card[\boldsymbol{i}] \frac{a_{\boldsymbol{i}}}{\sqrt{d(p_{\boldsymbol{i}} ) }} \Big\vert^{\frac{2m}{m+1}}  \bigg)^{\frac{m+1}{2m}} \\
=  \bigg( \sum_{\boldsymbol{i} \in \mathcal{M}(m,N)} \Big\vert \card[\boldsymbol{i}]^{1-\frac{m+1}{2m}} \frac{a_{\boldsymbol{i}}}{\sqrt{d(p_{\boldsymbol{i}} ) }} \Big\vert^{\frac{2m}{m+1}}  \bigg)^{\frac{m+1}{2m}} \, .
\end{multline*}
We now use an inequality due to Blei \cite[Lemma~5.3]{Bl79} (see also \cite[Lemma~1]{DeFrOrOuSe11}): for any family of complex numbers $\big(b_{\boldsymbol{i} } \big)_{\boldsymbol{i} \in  \mathcal{M}(m,N)}$ we have
\begin{equation} \label{Blei}
  \sum_{\boldsymbol{i} \in  \mathcal{M}(m,N)} \vert b_{\boldsymbol{i} } \vert^{\frac{2m}{m+1}}
\leq \prod_{k=1}^{m} \bigg( \sum_{j=1}^{N} \Big( \sum_{\boldsymbol{i} \in \mathcal{M}(m-1,N) } \vert b_{(\boldsymbol{i},_{k} j )} \vert^{2} \Big)^{1/2} \bigg)^{\frac{2}{m-1}} \, .
\end{equation}
Using this and the fact that $\card[(\boldsymbol{i},_{k}j)] \leq m \card[\boldsymbol{i}]$ we get
\begin{align*}
  \bigg( \sum_{\alpha \in \Lambda} \Big( \frac{\vert c_{\alpha} \vert }{\sqrt{\alpha + 1}} & \Big)^{\frac{2m}{m+1}} \bigg)^{\frac{m+1}{2m}} \\
& \leq \prod_{k=1}^{m} \bigg( \sum_{j=1}^{N} \Big( \sum_{\boldsymbol{i} \in \mathcal{M}(m-1,N) } \Big\vert \card[(\boldsymbol{i},_{k} j )]^{\frac{m-1}{2m}} \frac{a_{(\boldsymbol{i},_{k} j )}}{\sqrt{d(p_{(\boldsymbol{i},_{k} j )} ) }} \Big\vert^{2} \Big)^{1/2} \bigg)^{\frac{1}{m}} \\
& \leq \prod_{k=1}^{m} \bigg( \sum_{j=1}^{N} \Big( \sum_{\boldsymbol{i} \in \mathcal{M}(m-1,N) } \Big\vert \card[\boldsymbol{i}]^{\frac{m-1}{2m}} m^{\frac{m-1}{2m}} \frac{a_{(\boldsymbol{i},_{k} j )}}{\sqrt{d(p_{(\boldsymbol{i},_{k} j )} ) }} \Big\vert^{2} \Big)^{1/2} \bigg)^{\frac{1}{m}} \\
& = m^{\frac{m-1}{2m}} \prod_{k=1}^{m} \bigg( \sum_{j=1}^{N} \Big( \sum_{\boldsymbol{i} \in \mathcal{M}(m-1,N) } \card[\boldsymbol{i}] \Big\vert  \frac{a_{(\boldsymbol{i},_{k} j )}}{\sqrt{d(p_{(\boldsymbol{i},_{k} j )} ) }} \Big\vert^{2} \Big)^{1/2} \bigg)^{\frac{1}{m}} \, .
\end{align*}
We now bound each one of the sums in the product. We use the fact that the coefficients $a_{\boldsymbol{j}}$ are symmetric. Also, if $q$ divides $p_{i_{1}} \cdots p_{i_{m}} = p_{\boldsymbol{i}}$, then it also divides
$p_{i_{1}} \cdots p_{i_{m}} p_{j} = p_{(\boldsymbol{i},_{k} j)}$; hence $d(p_{\boldsymbol{i}})  \leq d(p_{(\boldsymbol{i},_{k} j )} )$ for every $\boldsymbol{i}$ and every $j$. This altogether gives
\begin{multline*}
 \sum_{j=1}^{N} \Big( \sum_{\boldsymbol{i} \in \mathcal{M}(m-1,N) } \card[\boldsymbol{i}] \Big\vert  \frac{a_{(\boldsymbol{i},_{k} j )}}{\sqrt{d(p_{(\boldsymbol{i},_{k} j )} ) }} \Big\vert^{2} \Big)^{1/2} \\
= \sum_{j=1}^{N} \Big( \sum_{\boldsymbol{i} \in \mathcal{J}(m-1,N) } \card[\boldsymbol{i}]^{2} \frac{\vert  a_{(\boldsymbol{i},_{k} j )} \vert^{2} }{d(p_{(\boldsymbol{i},_{k} j )} )} \Big)^{1/2} \\
\leq \sum_{j=1}^{N} \Big( \sum_{\boldsymbol{i} \in \mathcal{J}(m-1,N) }  \frac{\vert \card[\boldsymbol{i}] a_{(\boldsymbol{i},_{k} j )} \vert^{2}}{d(p_{\boldsymbol{i}} )} \Big)^{1/2} \, .
\end{multline*}
Let us note that what we have here are the coefficients of an $(m-1)$-homogeneous polynomial in $N$ variables, we use now \eqref{Helson} to conclude our argument
\begin{align*}
 \sum_{j=1}^{N} \Big( \sum_{\boldsymbol{i} \in \mathcal{J}(m-1,N) } &  \frac{\vert \card[\boldsymbol{i}] a_{(\boldsymbol{i},_{k} j )} \vert^{2}}{d(p_{\boldsymbol{i})} )} \Big)^{1/2} \\
& \leq \sum_{j=1}^{N} \int_{\mathbb{T}^{N}} \Big\vert \sum_{\boldsymbol{i} \in \mathcal{J}(m-1,N) }   \card[\boldsymbol{i}] a_{(\boldsymbol{i},_{k} j )} w_{i_{1}} \cdots w_{i_{m-1}}  \Big\vert  d\sigma(w)\\
& \leq \int_{\mathbb{T}^{N}} \sum_{j=1}^{N} \Big\vert \sum_{\boldsymbol{i} \in \mathcal{M}(m-1,N) }   a_{(\boldsymbol{i},_{k} j )} w_{i_{1}} \cdots w_{i_{m-1}}  \Big\vert  d\sigma(w)\\
& \leq \sup_{z \in \mathbb{D}^{N}} \sum_{j=1}^{N} \Big\vert \sum_{\boldsymbol{i} \in \mathcal{M}(m-1,N) }   a_{(\boldsymbol{i},_{k} j )} z_{i_{1}} \cdots z_{i_{m-1}}  \Big\vert \\
& = \sup_{z \in \mathbb{D}^{N}}  \sup_{y \in \mathbb{D}^{N}} \Big\vert \sum_{j=1}^{N} \sum_{\boldsymbol{i} \in \mathcal{M}(m-1,N) }   a_{(\boldsymbol{i},_{k} j )} z_{i_{1}} \cdots z_{i_{m-1}} y_{j} \Big\vert \\
& \leq \Big( 1 - \frac{1}{m-1} \Big)^{m-1} \sup_{z \in \mathbb{D}^{N}} \Big\vert \sum_{\alpha \in \Lambda} c_{\alpha} z^{\alpha} \Big\vert \,,
\end{align*}
where the last inequality follows from a result of Harris \cite[Theorem~1]{Ha75} (see also \cite[(13)]{DeFrOrOuSe11}). This completes the proof.
\end{proof}

\noindent As we have already mentioned, very recently  \cite[Corollary 5.3]{BaPeSe13} has shown that for every $\varepsilon>0$ there exists $\kappa>0$ such that  \eqref{Bohnenblust Hille} holds with $\kappa (1+\varepsilon)^m$.
The main idea for the  proof is to replace \eqref{Blei} by a similar inequality in which we have mixed sums with $k$ and $m-k$ indices (instead of $1$ and $m-1$, as we have here). This allows to use instead of \eqref{Bayart} the following inequality:
\[
 \bigg( \sum_{\vert \alpha \vert = m} \vert c_{\alpha} \vert^{2}  \bigg)^{1/2} \leq c_{p}^{m} \bigg( \int_{\mathbb{T}^{N}} \Big\vert \sum_{\vert \alpha \vert = m} c_{\alpha} w^{\alpha} \Big\vert^{p} d\sigma (w) \bigg)^{\frac{1}{p}} \text{ for }  1 \leq p \leq 2 \,.
\]
A good control on the constants $c_{p}$ (that tend to $1$ as $p$ goes to $2$) gives the improvement on the constant in \eqref{Bohnenblust Hille} presented in \cite{BaPeSe13}. In our setting, by dividing by $\alpha +1$, we are using \eqref{Helson}, which already has constant
$1$. Hence this new approach does not improve the constants in our setting.\\

\noindent The authors wish to thank the referee for her/his careful reading and the suggestions and comments that helped to improve the paper.

\end{document}